\newtheorem{theorem}{Theorem}
\setlist[enumerate]{topsep=1ex,itemsep=-0.5ex,leftmargin=*}
\setlist[enumerate,2]{topsep=-0.5ex,itemsep=-0.5ex,leftmargin=*}
\setlist[itemize]{topsep=1ex,itemsep=-0.5ex,leftmargin=*}
\setlist[itemize,2]{topsep=-0.5ex,itemsep=-0.5ex}
\title{Rigid foldability of the augmented square twist }
\author{Thomas C. Hull\thanks{Western New England University, {\tt \{thull, michael.urbanski1\}@wne.edu}} \and Michael T. Urbanski\footnotemark[1]}
\date{}
  \let\runtitle\@title
  \let\runauthor\shortauthor
\begin{document}

\maketitle

\begin{abstract}
  Define the augmented square twist to be the square twist crease pattern with one crease added along a diagonal of the twisted square.  In this paper we fully describe the rigid foldability of this new crease pattern.  Specifically, the extra crease allows the square twist to rigidly fold in ways the original cannot.  We prove that there are exactly four non-degenerate rigid foldings of this crease pattern from the unfolded state.
\end{abstract}

\section{Introduction}
\label{sec:introduction}
The classic square twist, shown in Figure~\ref{HU-fig0}(a) where bold creases are mountains and non-bolds are valleys, is well-known in origami art from its use in the Kawasaki rose \cite{conn} and in origami tessellations \cite{Gjerdre}.  However, this is also an example of an origami crease pattern that is not {\em rigidly-foldable}, meaning that it cannot be folded along the creases into the flat, square-twist state without bending the faces of the crease pattern along the way.  (See \cite{projectorigami} for one proof of this.)  In fact, this lack of rigid foldability has been used by researchers to study the mechanics of bistability in origami \cite{Jesse}.  Interestingly, there are ways one can change the mountain-valley assignment of the square twist to make it rigidly foldable; they require the inner diamond of Figure~\ref{HU-fig0}(a) to be either MMVV, MVVV, or VMMM \cite{Langtwists}.

\begin{figure}[h]
  \centering
  \includegraphics[scale=.5]{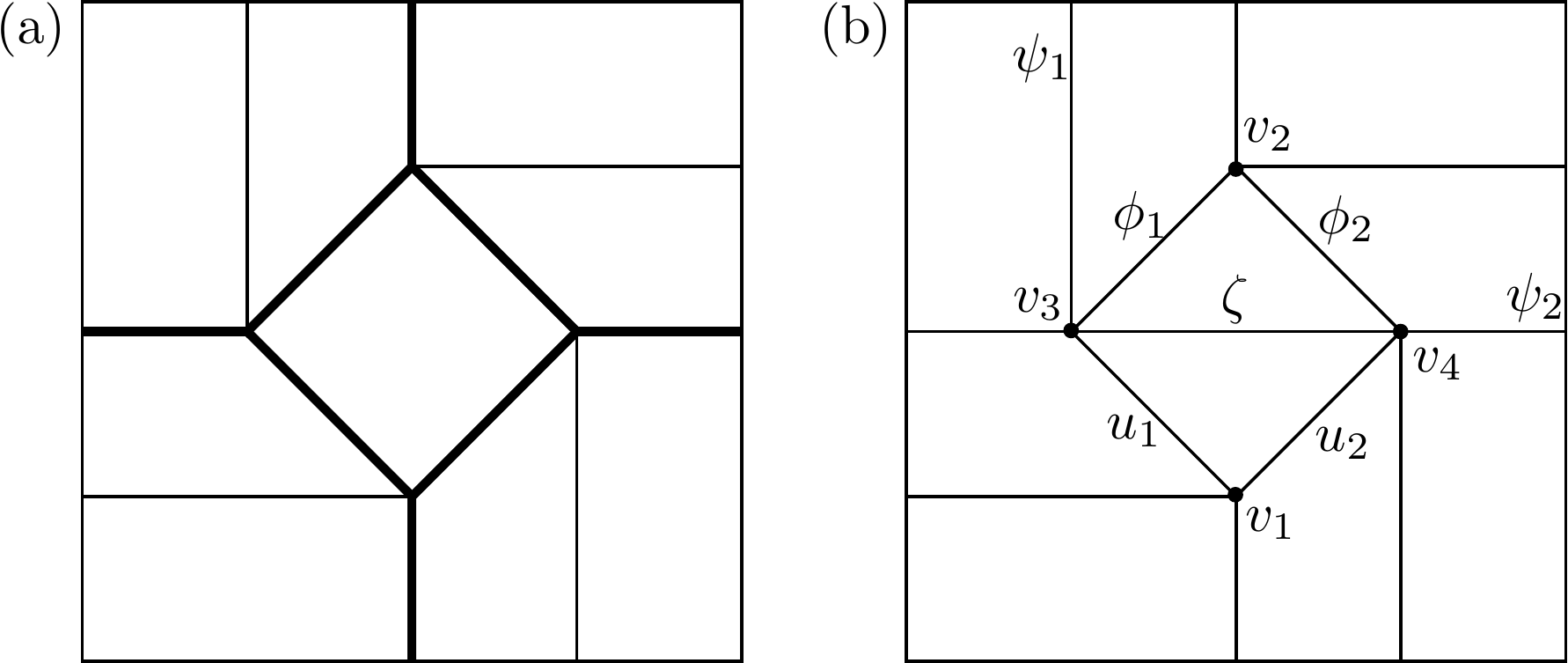}
  \caption{(a) The classic square twist with mountain-valley assignment.  (b) The augmented square twist, with vertices and folding angles labeled.}
  \label{HU-fig0}
\end{figure}

A logical question to ask is: Could we add creases to the classic square twist to make it rigidly foldable?  The answer is, ``Yes!'' and the simplest way is to add a single crease along a diagonal of the inner square diamond, as shown in Figure~\ref{HU-fig0}(b).  In this paper we will prove the rigid-foldability of this augmented square twist crease pattern and characterize its configuration space (i.e., its kinematics).

\section{Preliminaries}
\label{sec:section1}
There are a number of tools that we will be using in this paper.

\begin{figure}
  \centering
  \includegraphics[scale=.4]{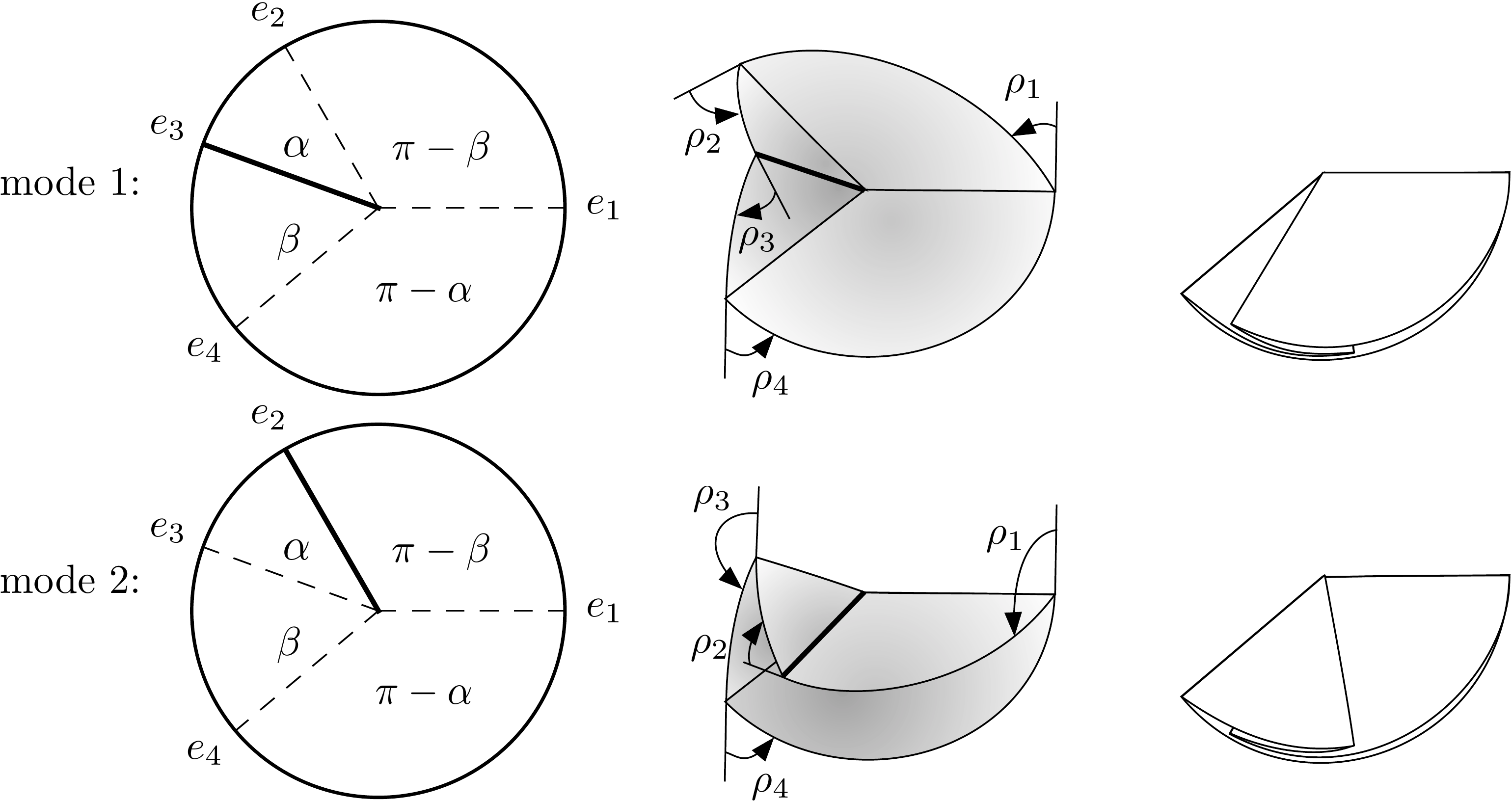}
  \caption{The two modes of a rigid folding of a degree-4 flat-foldable vertex.}
  \label{HU-fig1}
\end{figure}

One is the set of folding angle equations for a degree-4, flat-foldable vertex to rigidly fold and unfold \cite{Langtwists,Tachi2016}.  If we let $\alpha$ and $\beta$ be consecutive plane angles between the creases with $\alpha<\beta$ and $\alpha+\beta<\pi$, and we let $\rho_1,\ldots, \rho_4$ be the folding angles as illustrated in Figure~\ref{HU-fig1}, then there are equations for the mode 1 and mode 2 folding angles: 
$$\rho_1=-\rho_3,\ \rho_2=\rho_4\mbox{ in mode 1, }\rho_1=\rho_3,\ \rho_2=-\rho_4\mbox{ in mode 2,}$$
$$\tan\frac{\rho_1}{2} = \frac{\cos \frac{\alpha+\beta}{2}}{\cos \frac{\alpha-\beta}{2}}\tan\frac{\rho_2}{2} \mbox{, and }
\tan\frac{\rho_2}{2} = \frac{\sin \frac{\alpha-\beta}{2}}{\sin \frac{\alpha+\beta}{2}}\tan\frac{\rho_1}{2}.$$
For the degree-4 vertices in the augmented square twist, we have $\alpha=45^\circ$ and $\beta=90^\circ$, which gives us
$$\frac{\cos \frac{135^\circ}{2}}{\cos\frac{-45^\circ}{2}} = -\frac{\sin \frac{-45^\circ}{2}}{\sin\frac{135^\circ}{2}} = \tan\frac{45^\circ}{2} = \sqrt{2}-1.$$
Thus we get, for mode 1 and mode 2 respectively,
$$\rho_1=2\arctan((\sqrt{2}-1)\tan\frac{\rho_2}{2})\mbox{ and }\rho_2 = -2\arctan((\sqrt{2}-1)\tan\frac{\rho_1}{2}).$$

The other major tool we will use is Balkcom's procedure for deriving the kinematic equations for rigidly folding a vertex of any degree \cite{Balkcom}.  We'll apply this to the degree-5 vertices in the augmented square twist as follows:  Impose a coordinate system where the unfolded paper is in the $xy$-plane in $\mathbb{R}^2$, $v_3$ is the origin, and the crease whose folding angle is $\phi_1$ lies along the positive $x$-axis, as in Figure~\ref{HU-fig2.5}.

\begin{figure}
  \centering
  \includegraphics[scale=.6]{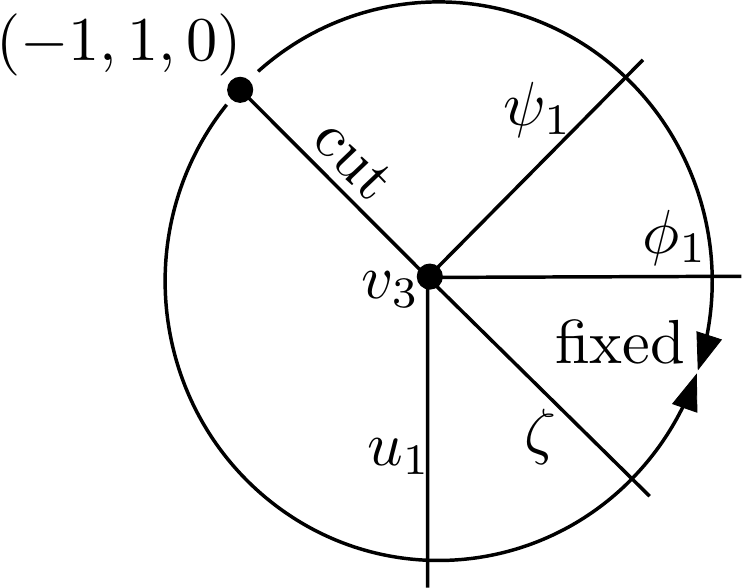}
  \caption{Analyzing the folding angles of one of the degree-5 vertices.}
  \label{HU-fig2.5}
\end{figure}

We cut the paper from $v_3$ along the crease to the point $(-1,1,0)$ and then imagine folding the two ends of this cut towards the fixed sector of paper between the $\phi_1$ and $\zeta$ creases; see Figure~\ref{HU-fig2.5}. The image of the point $(-1,1,0)$ under these two folds must be equal.  Therefore,  if we let $R_x(\theta)$ and $R_z(\theta)$ denote the $3\times 3$ matrices that rotate $\mathbb{R}^3$ by angle $\theta$ about the $x$- and $z$-axes, respectively, we must have
\begin{equation}\label{eqBalk}
\begin{split}
R_z(-45^\circ)R_x(-\zeta)R_z(45^\circ)R_z(-90^\circ)R_x(-u_1)R_z(90^\circ)
(-1,0,1)^T
= \\
R_x(\phi_1)R_z(45^\circ)R_x(\psi_1)R_z(-45^\circ)(-1,1,0)^T.
\end{split}
\end{equation}

We let the folding angles $u_1$ and $\zeta$ be the independent variables (since every degree-5 vertex will have 2 degrees of freedom) and let $\phi_1$ and $\psi_1$ be the dependent variables.  Then the $x$-coordinate of the identity in Equation~\eqref{eqBalk} gives us a formula for $\psi_1$ in terms of $u_1$ and $\zeta$ (note that $u_1,\zeta\in(-\pi,\pi))$:
\begin{equation}\label{eq0}
\cos\psi_1 = \frac{1}{2}(1-\cos\zeta + \cos u_1(1+\cos\zeta) - \sqrt{2}\sin u_1 \sin\zeta).
\end{equation}
Since $\psi_1\in(-\pi,\pi)$, there will be two solutions for $\psi_1$ of the form $\pm a\in(-\pi,\pi)$; we will denote these $\psi_1^+$ and $\psi_1^-$.  Each of these solutions will then determine a solution for $\phi_1$ from the other coordinates of Equation~\eqref{eqBalk}, and we will denote these $\phi_1^+$ and $\phi_1^-$.  (These equations are awkward and unenlightening, but their graphs will be shown in subsequent Figures.)

For the degree-5 vertex $v_4$ we will have $u_2$ and $\zeta$ be the independent variables and $\phi_2$ and $\psi_2$ the dependent ones. Again, the two solutions for $\psi_2$ will determine a pair of solutions for $\phi_2$, which we denote $\phi_2^+$ and $\phi_2^-$.


\begin{theorem}\label{HU-thm1}
The augmented square twist crease pattern, when rigidly folded, is a 1-degree-of-freedom (DOF) system.
\end{theorem}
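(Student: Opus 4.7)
The plan is to count degrees of freedom by combining the local DOF at each vertex with the constraints that shared creases must carry matching folding angles. The augmented square twist has four interior vertices: two degree-4 flat-foldable vertices $v_1, v_2$ at the corners of the inner square not meeting the diagonal, and two degree-5 vertices $v_3, v_4$ at the endpoints of the diagonal $\zeta$. From the preliminaries, each degree-4 vertex is a 1-DOF system and each degree-5 vertex is a 2-DOF system.

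A direct global count already gives the expected answer. The pattern has $13$ creases ($4$ inner-square sides, the diagonal $\zeta$, and $8$ pleat creases joining the interior vertices to the paper boundary). Each interior vertex imposes $3$ independent local constraints on its incident folding angles ($4-1$ for a degree-4 vertex, $5-2$ for a degree-5 vertex), giving $12$ constraints in total. Provided these are generically independent, the configuration space is $13-12=1$ dimensional.

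To justify independence, I would reduce to a concrete small system. Parameterize using $(u_1, u_2, \zeta)$: the degree-5 analysis at $v_3$ expresses $\phi_1$ and $\psi_1$ as functions of $(u_1, \zeta)$, and the analogous analysis at $v_4$ expresses $\phi_2$ and $\psi_2$ as functions of $(u_2, \zeta)$. The remaining creases at $v_3$ and $v_4$ are pleats that terminate at the paper boundary and so impose no further inter-vertex conditions. At the degree-4 vertex $v_1$, the inner-square creases $\phi_1$ and $\phi_2$ sit opposite each other in the cyclic ordering, so the mode-1 / mode-2 relations given in the preliminaries force $\phi_2 = \pm\phi_1$; likewise $v_2$ yields $\psi_2 = \pm\psi_1$. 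Everything therefore collapses to the pair
\begin{equation*}
\phi_2(u_2, \zeta) = \pm\phi_1(u_1, \zeta), \qquad \psi_2(u_2, \zeta) = \pm\psi_1(u_1, \zeta)
\end{equation*}
in the three unknowns $u_1, u_2, \zeta$, with each of the four sign combinations corresponding to one of the non-degenerate rigid foldings announced in the abstract.

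The hard part will be showing that, on each branch, the Jacobian of this pair with respect to $(u_1, u_2, \zeta)$ has rank $2$, so that the implicit function theorem produces a $1$-parameter family rather than an isolated configuration or a higher-dimensional sheet. I would verify this by differentiating the defining relations for $\phi_1^\pm, \psi_1^\pm, \phi_2^\pm, \psi_2^\pm$ coming from Equation~\eqref{eqBalk} and Equation~\eqref{eq0} implicitly, evaluating the resulting gradients at a reference point on each branch, and checking that the two gradient vectors are linearly independent there. Continuity of the rank then extends the conclusion to a neighborhood, and a separate check at the boundary cases (flat-folded or degenerate configurations) rules out any drop in dimension along a branch, yielding a $1$-DOF system globally.
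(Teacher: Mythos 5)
Your overall strategy---parameterize by a few folding angles, write down the inter-vertex compatibility conditions, and argue via the implicit function theorem that they cut out a curve---is essentially the paper's, and your insistence on an explicit Jacobian rank check is if anything more careful than the paper's one-line ``we can solve Equation~\eqref{eq1} for $\zeta$.'' The problem is that your reduction misidentifies the constraints, so the concrete system you propose to analyze is not the one the crease pattern imposes. In this pattern $u_1$ and $u_2$ are the two inner-square creases meeting at the degree-4 vertex $v_1$, $\phi_1$ and $\phi_2$ are the two meeting at $v_2$, and $\psi_1,\psi_2$ are pleat creases at the degree-5 vertices $v_3,v_4$ that run out to the paper boundary. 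Consequently there is no constraint of the form $\psi_2=\pm\psi_1$: those two creases share no vertex and lie on no closed cycle of faces, so no vertex relates them. At the same time you have dropped the genuine constraint at $v_1$ relating $u_1$ and $u_2$, which is what reduces the a priori parameters $(u_1,u_2,\zeta)$ to $(u_1,\zeta)$ before the loop constraint at $v_2$ removes $\zeta$. Your ``two equations in three unknowns'' lands on the right dimension only by accident; the variety it defines is not the configuration space.

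A second, smaller error: $\phi_1$ and $\phi_2$ are adjacent, not opposite, creases at $v_2$---they bound the $90^\circ$ corner of the inner square---so the degree-4 relation between them is the tangent-of-half-angle law of Equation~\eqref{eq1}, not $\phi_2=\pm\phi_1$; the relations $\rho_1=\mp\rho_3$, $\rho_2=\pm\rho_4$ apply only to opposite pairs. The correct system is: $u_2$ determined from $u_1$ by the $v_1$ relation, $\phi_1=\phi_1^{\pm}(u_1,\zeta)$ and $\phi_2=\phi_2^{\pm}(u_2,\zeta)$ from the degree-5 kinematics, and the single loop constraint \eqref{eq1} at $v_2$, i.e.\ one equation in the two remaining unknowns $(u_1,\zeta)$. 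Once that system is set up, your implicit-function-theorem plan (verify a nonvanishing $\zeta$-derivative along each branch) is exactly the right way to make the paper's argument rigorous, and your global mobility count $13-12=1$ is a reasonable sanity check---though, as the rigidly foldable MMVV assignment of the classic square twist shows, such counts can fail when constraints degenerate, so the branch-by-branch verification is where the real work lies.
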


\begin{proof} 
Let $u_1$ be a free parameter.  Then $u_i=(-1)^i 2\arctan((\sqrt{2}-1)\tan(u_j/2))$ by the folding angle equations for $v_1$.  (Here $(i,j) = (2,1)$ for mode 1 and $(1,2)$ for mode 2.)  Then allowing, for the moment, $\zeta$ to be freely chosen, we can determine a finite number of possible folding angles for $\phi_1, \psi_1, \phi_2$, and $\psi_2$, each of which will be functions of $u_1$ and $\zeta$.  Now, since the folding angles $\phi_1$ and $\phi_2$ meet at the degree-4 vertex $v_2$, we have 
\begin{equation}\label{eq1}
\phi_i=(-1)^j 2\arctan((\sqrt{2}-1)\tan(\phi_j/2)),
\end{equation}
where $(i,j)=(1,2)$ for mode 1 and $(2,1)$ for mode 2.  We can solve Equation \eqref{eq1} for $\zeta$ and thus make $\zeta$ a dependent variable.  That is, the {\em loop constraint} of the square twist cycle $\{v_1,v_3,v_2,v_4\}$ removes one of the degrees of freedom from the degree-5 vertices. Thus this rigidly-foldable crease pattern is a 1-DOF system. 
\end{proof}

\noindent{\bf Remark 1:}
Note that there are some slight exceptions to Theorem~\ref{HU-thm1}.  For example, at the unfolded state there are multiple combinations of folding angle velocities that can be chosen to fold into different three-dimensional shapes.  As we will see in the next section, we can think of the space of folded configurations as a manifold, and points such as the origin (representing the unfolded state) will be singular points of this manifold.  Such singular points have multiple, but a finite number of, tangent vectors via which the crease pattern may fold.  Since these singular points do not define a fully two- (or higher) dimensional tangent space, they are not considered 2-DOF (or higher) points of the origami mechanism.

\section{Counting modes}\label{sec:section2}

The {\em configuration space} of a rigidly folding origami crease pattern with $n$ crease lines is a subset of $C\subset \mathbb{R}^n$ where for every point $p\in C$, the coordinates of $p$ equal the folding angles of the creases for one of the valid rigid folding states of the crease pattern.  That is, if our creases are labeled $1, \ldots, n$ and their folding angles are $\rho_1,\ldots, \rho_n$, then $p=(\rho_1,\ldots, \rho_n)$ would be a point in $C$.  As the creases flex, the points $p$ will trace out a manifold in $\mathbb{R}^n$ whose intrinsic dimension will depend on the degrees of freedom of the crease pattern.

Since the augmented square twist crease pattern is a 1-DOF system, its configuration space will consist of a collection of curves.  Each curve will be a folding mode, and we want to count how many of these curves will pass through the origin (the unfolded state).  

We also only want folding modes that have all four vertices folding.  That is, we could fold the paper in half, only using the horizontal creases passing through $v_3$ and $v_4$ being folded and the rest unfolded.  That will make a (straight line) curve through the origin, but it doesn't account for a very interesting way to fold this crease pattern.  Rather, it is a {\em degenerate} case, since it's not using all of the creases in the pattern.

So, if all four vertices must fold, then the degree-4 vertices $v_1$ and $v_2$ can fold in either mode 1 or mode 2. For each combination, we have choices to make for the degree-5 vertices $v_3$ and $v_4$.  The number of such choices will depend on the number of intersections of the curves $\phi_1^\pm$ and $\phi_2^\pm$ when made to satisfy the loop constraint.  We enumerate these cases below.

\subsection{Case 1: $v_1$ and $v_2$ are both mode 1}

Then we have, from the requirements of the degree-4 vertices,
\begin{equation*}
    \tan\frac{u_2}{2}   = (\sqrt{2}-1)\tan\frac{u_1}{2}\mbox{ and } 
    \tan\frac{\phi_1}{2}   = (\sqrt{2}-1)\tan\frac{\phi_2}{2}.
  \label{eq3}  
\end{equation*}
 Thus
$u_2=2\arctan((\sqrt{2}-1) \tan(u_1/2))$ and $\phi_1=2\arctan((\sqrt{2}-1)\tan(\phi_2/2))$. 
 Ignoring for the moment the loop constraint of the square twist, the kinematic equations for $\phi_1$ and $\phi_2$ will be functions of $u_1$ and $\zeta$.  Figure~\ref{HU-fig3} shows, for a fixed value of $u_1=1.6$, the graphs of $y=\phi_1^+$ and $y=\phi_1^-$ with respect to $\zeta$, which form a closed curve. Alongside of this are the graphs of $y=2\arctan((\sqrt{2}-1)\tan(\phi_2^+/2))$ and the same for $\phi_2^-$, which form a closed curve that ``wraps around" vertically (which makes sense if we imagine a folding angle of $+\pi$ being equivalent to one of $-\pi$).  Where these two curves intersect give $\zeta$-values where the loop constraint of the square twist diamond will be satisfied (the kinematic equations of the two degree-5 vertices will also satisfy the degree-4 vertex folding angles).

\begin{figure}
  \centering
   \includegraphics[width=\linewidth]{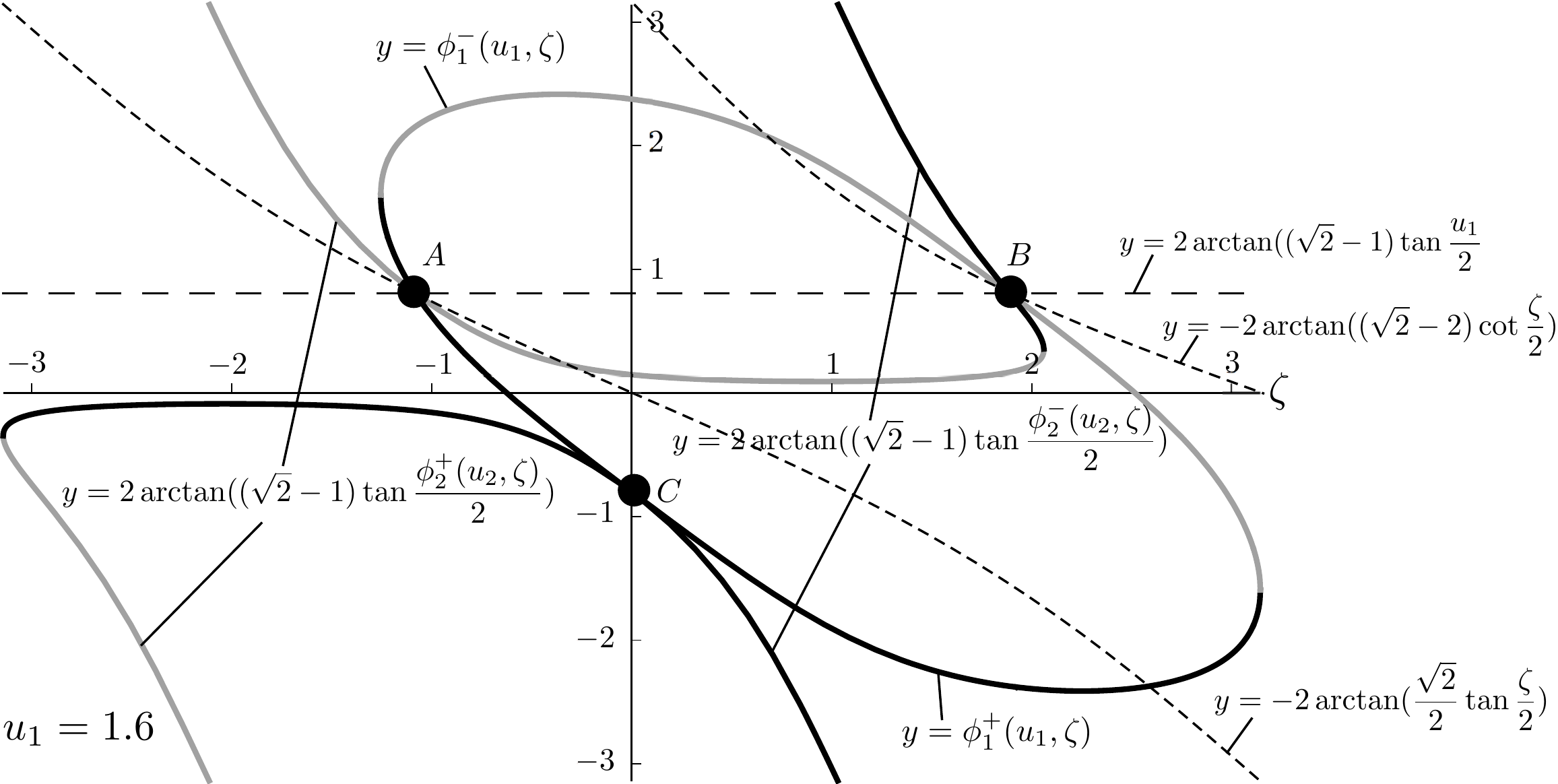}
  \caption{Plots of the curves $y=\phi_1^\pm$ and $y=2\arctan((\sqrt{2}-1)\tan(\phi_2^\pm/2))$ in case 1 with respect to the folding angle $\zeta$, with their intersection points labeled $A,B,C$, when $u_1=1.6$.}
  \label{HU-fig3}
\end{figure}

We labeled the three points of intersection of the $\phi_1^{\pm}$ and $2\arctan((\sqrt{2}-1)$ $\tan(\phi_2^{\pm}/2))$ curves by $A$, $B$, and $C$ in the above figure.  The equations for these curves are rather cumbersome, but algebraic manipulations on software like {\em Mathematica} show that for $-\pi\leq u_1\leq \pi$ we have the following:

\begin{itemize}
\item The point $A$ travels along the curve $y=-2\arctan((\sqrt{2}/2)\tan(\zeta/2))$.
\item The point $B$ travels along the curve $y=2\arctan((\sqrt{2}-2)\cot(\zeta/2))$.
\item The point $C$ always remains on the $\zeta=0$ axis.
\item The points $A$ and $B$ are both on the horizontal line $y=2\arctan((\sqrt{2}-1)\tan(u_1/2))$, which is the folding angle $u_2$ when $v_1$ is in mode 1.
\end{itemize}

Therefore, we have that the intersection point $A$ gives a folding mode of the crease pattern where we have $\phi_1 = u_2$.  This folding, illustrated in Figure~\ref{HU-fig4}, results in the standard way origamists fold a square twist, with all mountain creases around the central square diamond. Note that in the folding shown in Figure~\ref{HU-fig4} an extra move is added at the end to make it open up into the classic square twist.

\begin{figure}
  \centering
   \includegraphics[scale=.25]{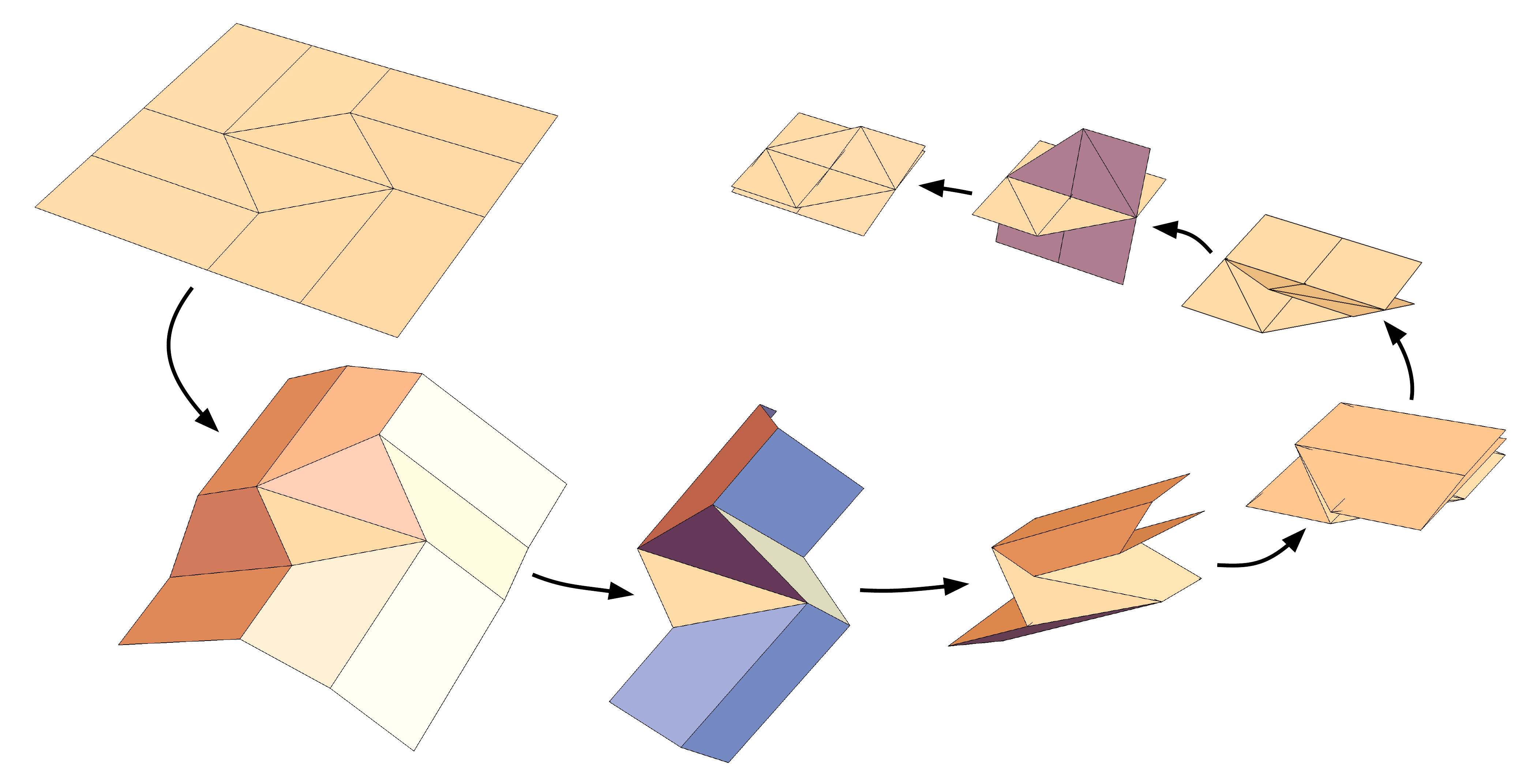}
  \caption{The rigid folding of the augmented square twist along the curve traced by $A$ in Figure~\ref{HU-fig3}.}
  \label{HU-fig4}
\end{figure}

For this folding mode, we have the interesting fact that the relationship between the folding angles $u_1$ and $\zeta$ is linear in the tangent of the half angles: 
$$\tan\frac{\zeta}{2} = (\sqrt{2} - 2)\tan\frac{u_1}{2}.$$

The folding mode indicated by the intersection point $C$ is degenerate.  The path traveled by $C$ shows that the center crease with folding angle $\zeta=0$ is never folded.  Thus in this case the folding angle equations merely provide one of the rigid foldings of the square twist, specifically the one shown in the Figure~\ref{HU-fig5}(a).

\begin{figure}
  \centering
   \includegraphics[scale=.3]{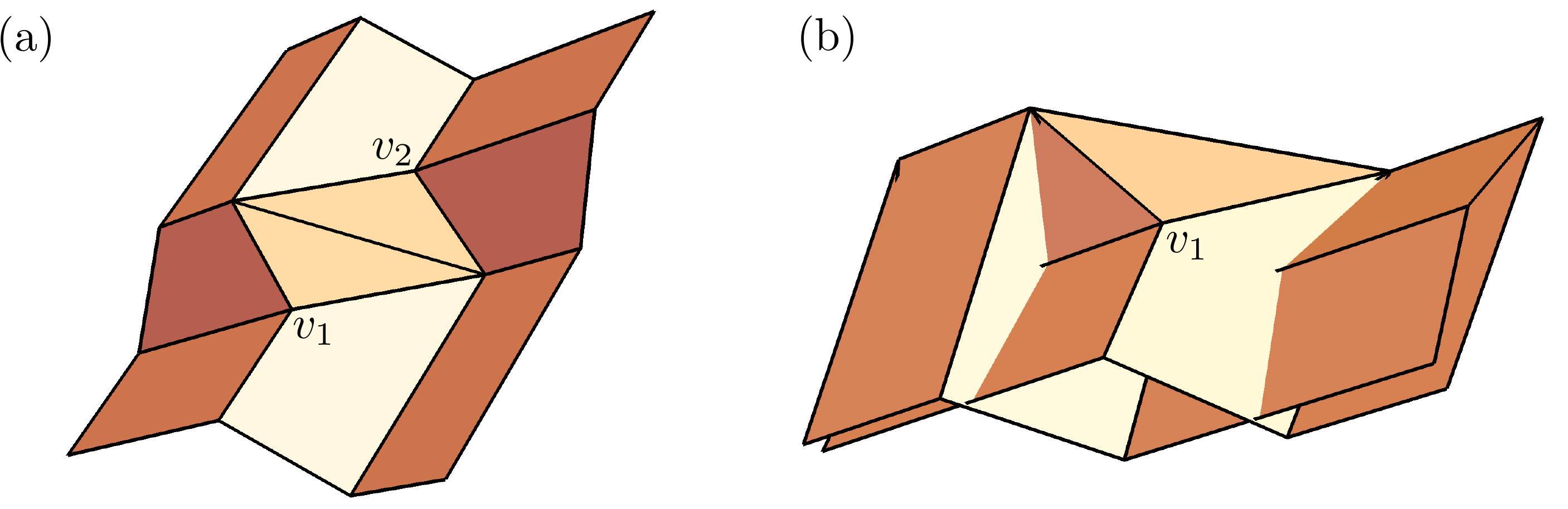}
  \caption{(a) The degenerate rigid folding of the trace of point $C$, which is a MMVV rigid folding of the non-augmented square twist. (b) A still image of the rigid folding made by the trace of $B$ in Figure~\ref{HU-fig3}, which self-intersects and never unfolds completely.}
  \label{HU-fig5}
\end{figure}

In Figure~\ref{HU-fig5}(b) we see an image of the folding mode that point $B$ traces.  Note that the curve made by the intersection point $B$ does not pass through the origin.  This means that this folding mode does not include the flat, unfolded state.  In fact, as you can see in Figure~\ref{HU-fig5}(b), this folding mode has the entire horizontal crease folded as if it was one crease.  This is not a legitimate folding mode since it doesn't include the unfolded state, not to mention requiring that the paper self-intersect.

\subsection{Case 2: $v_1$ and $v_2$ are both mode 2}

This case is handled similarly to Case 1.  Figure~\ref{HU-fig6} shows the oval curve made by $\phi_1^\pm$ that is the same as in Case 1.  This time, however, we want to see where this curve intersects those given by $\phi_2^\pm$ when combined with $v_1$ and $v_2$ being mode 2.  As such, we need 

\begin{figure}
  \centering
   \includegraphics[width=\linewidth]{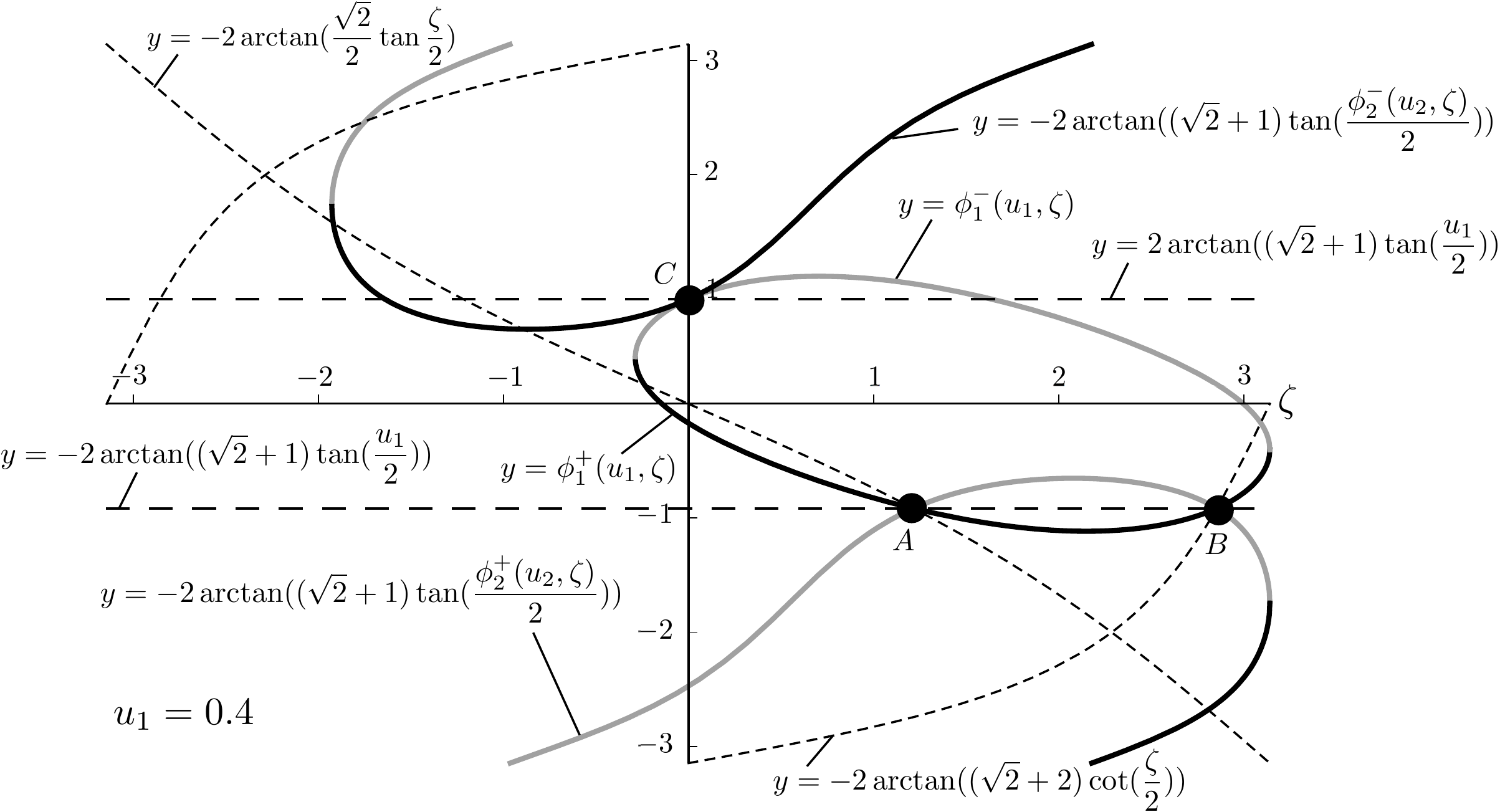}
  \caption{Plots of the case 2 $\phi_1^\pm$ and $-2\arctan((\sqrt{2}+1)\tan(\phi_2^\pm/2))$ curves with respect to the folding angle $\zeta$, with their intersection points labeled $A,B,C$, when $u_1=0.4$.}
  \label{HU-fig6}
\end{figure}

\begin{equation*}
    \tan\frac{u_1}{2}  = -(\sqrt{2}-1)\tan\frac{u_2}{2}\mbox{ and } \\
    \tan\frac{\phi_2}{2}  = -(\sqrt{2}-1)\tan\frac{\phi_1}{2}.
  \label{eq4}  
\end{equation*}
This means that we need to substitute $u_2=-2\arctan((\sqrt{2}+1)\tan(u_1/2))$ (since $1/(\sqrt{2}-1) = \sqrt{2}+1$) into the $\phi_2^\pm(u_2,\zeta)$ formulas and then plot $y=-2\arctan((\sqrt{2}+1)\tan(\phi_2^\pm(u_2,\zeta)))$.  This gives us, again, three intersection points with $y=\phi_1^\pm(u_1,\zeta)$; these are labeled $A$, $B$, and $C$ in Figure~\ref{HU-fig6} (where we picked $u_1=0.4$).

\begin{figure}
  \centering
   \includegraphics[width=\linewidth]{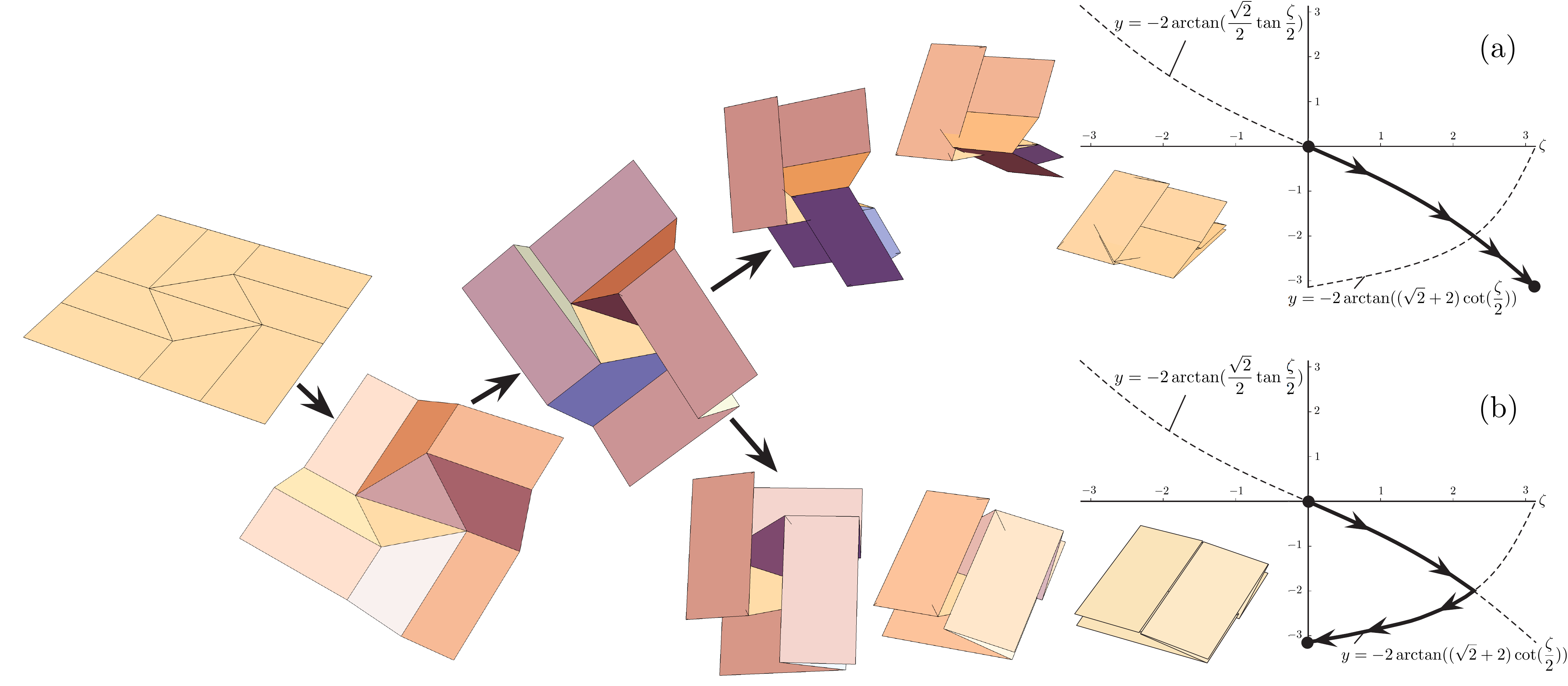}
  \caption{The augmented square twist folding along (a) the curve traced by $A$ in Figure~\ref{HU-fig6} and (b) a combination of the $A$ and $B$ curves.}
  \label{HU-fig7}
\end{figure}

Manipulating the equations and intersection points in Figure~\ref{HU-fig6} reveals the following for $-\pi\leq u_1\leq\pi$:

\begin{itemize}
\item The point $A$ travels along the curve $y=-2\arctan((\sqrt{2}/2)\tan(\zeta/2)).$
\item The point $B$ travels along the curve $y=-2\arctan((\sqrt{2}+2)\cot(\zeta/2)).$
\item The point $C$ always remains on the $\zeta=0$ axis.
\item The points $A$ and $B$ are both on the horizontal line $y=2\arctan((\sqrt{2}+1)\tan(u_1/2))$, which is the folding angle $u_2$.
\end{itemize}

Thus, as in Case 1, the trace of point $C$ indicates a degenerate folding mode where the augmented crease is not folded ($\zeta=0$), giving us one of the rigid foldings of a square twist.  The trace of point $B$ indicates a rigid folding that does not intersect the origin, and thus never unfolds completely.

The intersection point $A$ describes a rigid folding of a well-known (to origamists) way to fold a square twist, namely the iso-area square twist, popularized in the ``stretch wall'' model by Yoshihide Momotani \cite{Momotani}. Again, we obtain an interesting tangent-of-half-angles relationship between $\zeta$ and $u_1$:
$$\tan\frac{\zeta}{2}=(\sqrt{2}+2)\tan\frac{u_1}{2}.$$
However, the trace of $A$ from, say, the origin to $(\pi,-\pi)$ in Figure~\ref{HU-fig6} is not the way this twist is typically folded in that it folds the square diamond of the twist in half; see the folding illustration and graph marked (a) in Figure~\ref{HU-fig7}.  Rather, when origamists fold this version of the square twist, the square diamond is allowed to remain open, which would require $\zeta$ to become zero again once the other creases in the twist were folded.  Interestingly, the equations in our analysis do capture this folding and thus prove that it can be done rigidly with with the extra $\zeta$ crease.  This is shown in Figure~\ref{HU-fig7}(b), where we trace the point $A$ away from the origin until it intersects the curve traversed by $B$, whereupon we travel along the $B$ curve to the point $(0,-\pi)$.  

\subsection{Case 3: $v_1$ is mode 1 and $v_2$ is mode 2 (or vice-versa)}

\begin{figure}
  \centering
   \includegraphics[width=\linewidth]{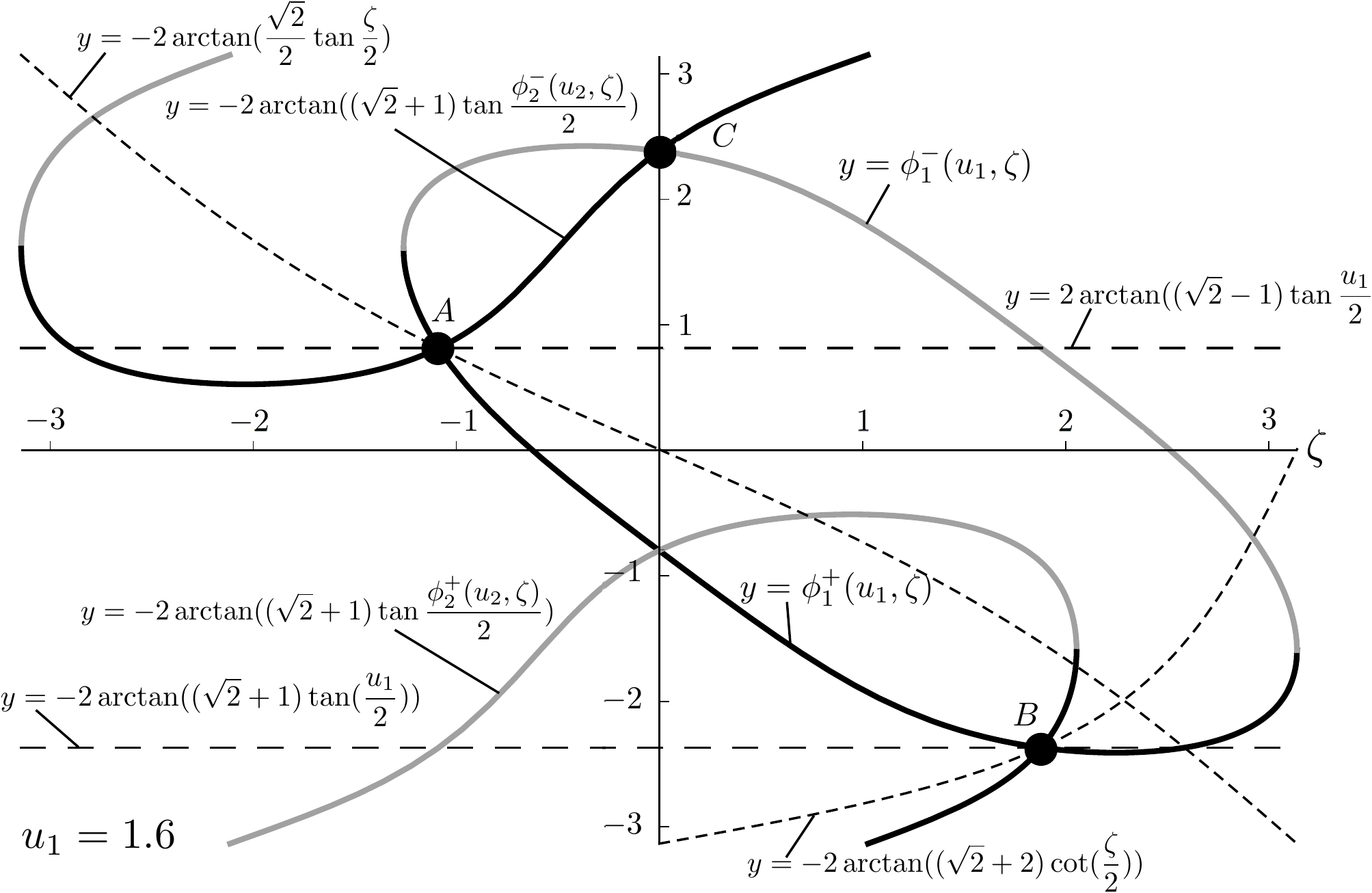}
  \caption{Plots of the case 3 $\phi_1^\pm$ and $-2\arctan((\sqrt{2}+1)\tan(\phi_2^\pm/2))$ curves with respect to the folding angle $\zeta$, with their intersection points labeled $A,B,C$, when $u_1=1.6$.}
  \label{HU-fig8}
\end{figure}

There are two other non-degenerate rigid folding modes of the augmented square twist.  They are the cases where vertices $v_1$ and $v_2$ have different degree-4 folding modes.  The folding angle curve analysis for the case where $v_1$ is mode 1 and $v_2$ is mode 2 is shown in Figure~\ref{HU-fig8}.  We still have three intersection points $A$, $B$, $C$ between the $y=\phi_1^\pm$ curves and the $\phi_2^\pm$ curve, modified so that vertex $v_2$ is in mode 2.  However, for these $\phi_2^\pm$ curves we need to substitute $u_2=2\arctan((\sqrt{2}-1)\tan(u_1/2))$ to make sure vertex $v_1$ is in mode 1.  

Again, the intersection point $A$ traces the curve $y=-2\arctan((\sqrt{2}/2)\tan(\zeta/2))$ and indicates a legitimate folding mode, while the point $B$ traces a curve that doesn't pass through the origin, and point $C$ remains on the $\zeta=0$ axis (and thus is a degenerate folding mode).  

A picture similar to that in Figure~\ref{HU-fig8} results in the case where $v_1$ is mode 2 and $v_2$ is mode 1; the rigid folding for the point $A$ in this case is shown in Figure~\ref{HU-fig9}.

\begin{figure}
  \centering
   \includegraphics[width=\linewidth]{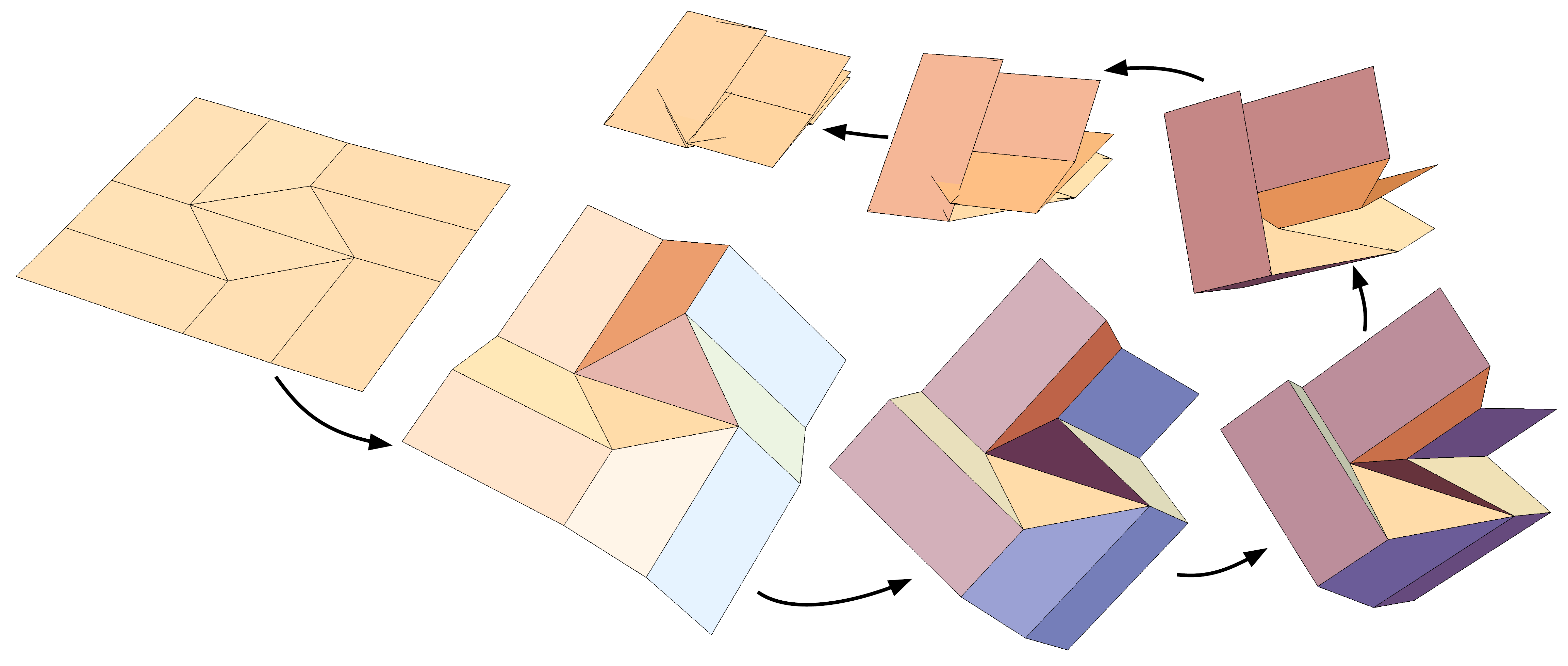}
  \caption{The rigid folding in the case where vertex $v_1$ is mode 2 and $v_2$ is mode 1.}
  \label{HU-fig9}
\end{figure}

\vspace{.1in}

\noindent{\bf Remark 2:}  In all of the rigid foldings of the augmented square twist, the folding angle equations model the paper folding flat, where all the folding angles either $\pm\pi$ or zero.  For example, the final folded forms in Figures~\ref{HU-fig4} and \ref{HU-fig7}(b) have $\pm\pi$ folding angles at all the creases except for $\zeta$, whose folding angle is zero.  One could follow the stacking order of the crease pattern faces as the folding angles approach $\pm\pi$ in order to obtain a layer ordering of the paper in the final, flat-folded form.  

However, this is a misleading way to think about layer ordering in flat origami models.  To see this, consider the classic square twist as folded in Figure~\ref{HU-fig4}.  If one makes a physical model of this square twist, one quickly sees that the rectangular faces of the crease pattern interweave.  That is, if we labeled the rectangular faces of the augmented square twist crease pattern $F_1, \ldots, F_4$ and tried to impose a layer ordering on them based on the flat-folded form, we would have
$$F_1 < F_2 < F_3 < F_4 < F_1,$$
which is not a valid order relation.  In order to make such layer orderings meaningful, one needs to take an approach such as that suggested by Justin \cite{Justin}, where faces of the crease pattern are further subdivided so as to avoid problems with the order relation.  

\section{Conclusions}

We have seen that while the standard square twist (where the central diamond has MMMM or VVVV mountain-valley assignment) and the iso-area square twist (where the central diamond is MVMV) are not rigidly-foldable, they do become rigidly foldable when an extra crease is added along one diagonal of the central diamond.  In proving this, we discovered that there are only four non-degenerate folding modes of this augmented square twist crease pattern that are connected to the unfolded state.  That is, there are four curves that pass through the origin in the configuration space of this crease pattern that are not constantly zero on the $\zeta$-axis.  There are 6 curves that also pass through the origin and have $\zeta=0$ (i.e., lie in a hyperplane that is orthogonal to the $\zeta$-axis), but these are degenerate cases, representing ways to rigidly fold the (non-augmented) square twist crease pattern.  Our work derives the equations for the folding angles in these cases, and thus the equations of the curves in the configuration space.  
Interested readers may access the {\em Mathematica} code we used for these equations, as well as the graphics shown in this paper, here: \url{http://mars.wne.edu/~thull/ast/ast.html}.
These equations could be used to analyze the self-foldability of any of these folding modes, under the definition of self-foldability from \cite{Tachi2016}.

While degree-4 vertices are well-studied, not much has been written about rigid foldings of degree-5 vertices.  Using Balkcom's method \cite{Balkcom}, we were able to fully study the kinematics of the degree-5 vertices in the augmented square twist and thus characterize its configuration space.  Perhaps the methods used here can be extended to other rigidly-foldable crease patterns with degree-5 and degree-4 vertices.  In doing so, the library of rigid origami mechanisms whose kinematics are completely understood would grow considerably.

\bibliographystyle{osmebibstyle}
\bibliography{7OSME-augsqtwist}



\end{document}